\documentclass[letterpaper, 10 pt, conference]{ieeeconf}  

\IEEEoverridecommandlockouts                              

\usepackage[utf8]{inputenc}
\usepackage{amsmath}
\usepackage{amssymb}
\usepackage{amsfonts}
\usepackage{bm}

\let\labelindent\relax

\usepackage{amsthm}
\usepackage{overpic}
\usepackage{lipsum}
\usepackage{enumitem}
\usepackage{cases}
\usepackage{subcaption}
\usepackage{graphicx}
\usepackage{ulem}
\usepackage{xcolor}
\usepackage{cite}

\usepackage{algorithm, algorithmicx}
\usepackage{algpseudocode}
\usepackage{mathbbol}
\usepackage{comment}

\algnewcommand{\Inputs}[1]{%
  \State \textbf{Inputs:}
  \Statex \hspace*{\algorithmicindent}\parbox[t]{.8\linewidth}{\raggedright #1}
}
\algnewcommand{\Initialize}[1]{%
  \State \textbf{Initialize:}
  \Statex \hspace*{\algorithmicindent}\parbox[t]{.8\linewidth}{\raggedright #1}
}

{
    \theoremstyle{plain}
    
}

\DeclareCaptionLabelFormat{lc}{\MakeLowercase{#1}~#2}

\newtheorem{theorem}{Theorem}
\newtheorem{remark}{Remark}
\newtheorem{lemma}{Lemma}

\newtheorem{definition}{Definition}
\newtheorem{proposition}{Proposition}
\newtheorem{example}{Example}[]

\makeatletter
\def\maketag@@@#1{\hbox{\m@th\normalfont\normalsize#1}}
\makeatother

\title{\LARGE \bf
Collaborative Altruistic Safety in Coupled Multi-Agent Systems \vspace{-1.5ex}
}

\author{Brooks A. Butler$^1$, Xiao Tan$^2$, Aaron D. Ames$^2$, and Magnus Egerstedt$^3$
\thanks{This research was supported in part by an appointment to the Intelligence Community Postdoctoral Research Fellowship Program at the University of California, Irvine, administered by the Oak Ridge Institute for Science and Education (ORISE) through an interagency agreement between the U.S. Department of Energy and the Office of the Director of National Intelligence (ODNI). This work is also supported in part by TII under project \#A6847.}
\thanks{$^1$Brooks A. Butler is with the Department of Electrical Engineering and Computer Science at the University of California, Irvine (Email: {\tt\small bbutler2@uci.edu}).}
\thanks{$^2$Xiao Tan and Aaron D. Ames are with the Department of Mechanical and Civil Engineering, California Institute of Technology, Pasadena, CA 91125, USA (Email: {\tt\small xiaotan, ames@caltech.edu}).}
\thanks{$^3$Magnus Egerstedt is with the Department of Computer Science at the University of North Carolina, Chapel Hill, NC, 27599, USA (Email: {\tt\small magnus@unc.edu)}}
}

\begin{document}

\IEEEaftertitletext{\vspace{-1.5\baselineskip}}

\maketitle
\thispagestyle{empty}
\pagestyle{empty}

\begin{abstract}
This paper presents a novel framework for ensuring safety in dynamically coupled multi-agent systems through collaborative control. Drawing inspiration from ecological models of altruism, we develop collaborative control barrier functions that allow agents to cooperatively enforce individual safety constraints under coupling dynamics. We introduce an altruistic safety condition based on the so-called Hamilton’s rule, enabling agents to trade off their own safety to support higher-priority neighbors. By incorporating these conditions into a distributed optimization framework, we demonstrate increased feasibility and robustness in maintaining system-wide safety. The effectiveness of the proposed approach is illustrated through simulation in a simplified formation control scenario.
\end{abstract}

\section{Introduction}
Providing safety assurances for multi-agent systems is a challenging and widely applicable area of research, with relevant applications found in both multi-robot \cite{wang2017safety,lippi2020human} and cyber-physical systems \cite{zhang2021physical,chen2020safety,wang2023distributed}. One perspective of particular interest is when agents are considered distinct decision-makers with individual safety constraints, and agents must also consider how neighboring agent decisions affect their safety, and vice versa, collaboratively. 
This interplay between individual constraints and collective outcomes parallels behaviors found in natural systems, where cooperation and tradeoffs emerge despite individual interests.
We find examples of this phenomenon in ecology, where individual organisms (agents) may take altruistic actions to increase the likelihood of passing on shared genes, i.e., increasing their ``inclusive fitness," even at the cost of some individuals. 

The notion of inclusive fitness is encoded through Hamilton's Rule \cite{hamilton1963evolution,butler2025hamilton,karam2025resource}, which describes when an altruistic act is beneficial from the vantage point of genetic fitness, or the likelihood of certain genes to survive in a population of organisms.  Although genetic fitness might not be particularly relevant in multi-agent problems, the underpinning idea of trading off costs and benefits across team members relative to the safety-criticality of individual agents is meaningful. Therefore, in this paper, we construct a framework for facilitating altruistic safety guarantees in multi-agent systems using ecologically inspired techniques.

To discuss safety formally, we leverage the now well-established methods, control barrier functions (CBFs) \cite{ames2017tac}, developed for safety-critical control. A growing body of work extends these tools to networked multi-agent systems \cite{chen2020safety,xiao2025continuous,mestres2023distributed}. In many cases, safety-critical control for multi-agent safety is implemented using a global, or shared, safety objective, which can then be decomposed into individual safety constraints for each agent that may be solved in a distributed manner \cite{chen2020safety,jagtap2020compositional}. However, designing a centralized safety constraint may be challenging for systems where safety should be considered individually at the level of each agent. A common and relevant situation is when agents wish to avoid unsafe regions individually while subjected to coupling dynamic effects from neighbors (e.g., obstacle avoidance while trying to keep a formation) \cite{butler2024collaborative_ecc, butler2024collaborativesafety}.  

In contrast with past approaches, this paper makes two main contributions. First, it improves upon the method in \cite{butler2024collaborativesafety} for synthesizing collaborative control barrier function conditions for dynamically coupled multi-agent systems with locally defined safety constraints. By incorporating a virtual first-order safety controller for each agent, we show that the resulting coupled safety problem can be addressed using existing distributed optimization frameworks \cite{xiao2025continuous}. Second, it introduces an ecologically inspired framework for altruistic decision-making, defining an altruistic safety condition that accounts for the relative safety criticality of each agent. We show that incorporating this condition into the distributed optimization framework expands the set of feasible safe actions for agents with higher safety importance.

\section{Agent-Level Safety with Coupled Dynamics} \label{sec:coupled_safety_cond}
This section introduces notation for coupled multi-agent systems with agent-level safety constraints and shows how coupled CBF-based safety conditions may be synthesized for each agent by considering the high-order dynamics of the coupled system. The resulting safety filter can then be solved locally using an existing distributed optimization framework.   
\subsection{Coupled Dynamics} \label{sec:coupled_dyn}
For a given agent~$i \in [n]$, where $[n] = \{1, \dots, n\}$ denotes the set of agent indices, let $\mathcal{N}_i^+$ be the set of all neighbors $j \in [n]$ that have an interaction with the dynamics of node~$i$, where $\mathcal{N}_i^+ = \{j \in [n]: (i,j) \in \mathcal{E} \}$, where $\mathcal{E} \subseteq [n] \times [n]$.
Similarly, all nodes $j \in [n] $ whose dynamics are affected by node~$i$ are given by $\mathcal{N}_i^- = \{j \in [n]: (j,i) \in \mathcal{E} \}$,
with the complete set of neighboring nodes given by $\mathcal{N}_i = \mathcal{N}_i^+ \cup \mathcal{N}_i^-$.
Note that node~$i$ is included in both $\mathcal{N}_i^+$ and $\mathcal{N}_i^-$ as it both affects and is affected by itself. 
Further, we define the state vector for each node as $x_i \in \mathbb{R}^{N_i}$, with $N = \sum_{i \in [n]} N_i$ being the state dimension of the entire system, $N_i^+ = \sum_{j \in \mathcal{N}_i^+ \setminus \{ i\}} N_j$ the combined dimension of incoming neighbor states, and $x_{\mathcal{N}_i^+} \in \mathbb{R}^{N_i^+}$ denoting the combined state vector of all incoming neighbors.  Then, for each agent~$i \in [n]$, we can describe its state dynamics, which we assume are time-invariant and control-affine, as
\begin{equation} \label{eq:net_dyn_sys}
    \dot{x}_i = f_i(x_i, x_{\mathcal{N}_i^+}) + g_i(x_i) u_i,
\end{equation}
where $f_i:\mathbb{R}^{N_i + N_i^+} \rightarrow \mathbb{R}^{N_i}$ and $g_i:  \mathbb{R}^{N_i} \rightarrow \mathbb{R}^{N_i} \times \mathbb{R}^{M_i}$ are locally Lipschitz for all $i \in [n]$, and $u_i \in \mathcal{U}_i \subset \mathbb{R}^{M_i}$. 
For notational compactness, given a node~$i \in [n]$, we collect the 1-hop neighborhood state as $\mathbf{x}_i = (x_i, x_{\mathcal{N}_i})$ and the 2-hop neighborhood state as $\mathbf{x}_i^+ = (x_i, x_{\mathcal{N}_i}, x_{\mathcal{N}_j}:\forall j \in \mathcal{N}_i)$.

\subsection{Coupled Safety Condition Through High-Order Dynamics} \label{sec:1D_example}
Node-level safety constraints are defined by the set
\begin{equation}\label{eq:safe_set_i_fullstate}
    \begin{aligned}
        \mathcal{C}_i &= \left\{ x_i \in \mathbb{R}^{N_i} : h_i(x_i) \geq 0 \right\}, \forall i \in [n]
    \end{aligned}
\end{equation}
where $h_i:\mathbb{R}^{N_i} \rightarrow \mathbb{R}$ is a continuously differentiable function whose zero-super-level set defines the region which node $i\in [n]$ considers to be safe.

To investigate the effect of coupling dynamics  on individual safety,  let us suppose for now that node $i$ is governed by a smooth feedback control law $k_i(\mathbf{x}_i)$. More details on this control law are specified later (see Remark~\ref{rem:k_i}). Following the conventional high-order CBF design\cite{tan2021high}, a second control barrier function candidate is formulated by  
\begin{equation} \label{eq:h_i+}
    h_i^+(\mathbf{x}_i) = \mathcal{L}_{f_i}h_i(\mathbf{x}_i) + \mathcal{L}_{g_i}h_i(x_i)k_i(\mathbf{x}_i) + \alpha_i h_i(x_i),
\end{equation}
where $\mathcal{L}_{f_i}h_i(\mathbf{x}_i)$ and $\mathcal{L}_{g_i}h_i(x_i)$ denote the Lie derivatives of $h_i$ with respect to $f_i$ and $g_i$, respectively, and $\alpha_i \in \mathbb{R}_{>0}$, with the corresponding safety constraint set
\begin{equation}\label{eq:C_i+}
    \mathcal{C}_i^+ = \{ \mathbf{x}_i \in \mathbb{R}^{N_i + N_i^+}: h_i^+(\mathbf{x}_i) \geq 0 \}.
\end{equation}

The CBF candidate $h_i^+$ provides a means for synthesizing a coupled safety condition for node~$i$ that includes the inputs of its neighbors. We see this coupling explicitly in the computation of $\dot{h}_i^+$

\vspace{-2ex}
\footnotesize
\begin{align}
   \dot{h}_i^+ &= \sum_{j \in \mathcal{N}_i^+} \big[ \mathcal{L}_{f_j} \mathcal{L}_{f_i}  h_i(\mathbf{x}_i, \mathbf{x}_j) +  \mathcal{L}_{f_j} \left(\mathcal{L}_{g_i}  h_i(x_i) k_i(\mathbf{x}_i)\right) \big] \nonumber \\
    &
   \quad 
   +\sum_{j \in \mathcal{N}_i^+} \big[ \mathcal{L}_{g_j} \mathcal{L}_{f_i}  h_i(\mathbf{x}_i, x_j) +  \mathcal{L}_{g_j} \left( \mathcal{L}_{g_i}  h_i(x_i) k_i(\mathbf{x}_i) \right) \big] u_j \nonumber \\
    &
    \quad 
    + \alpha_i \left( \mathcal{L}_{f_i}  h_i(\mathbf{x}_i) + \mathcal{L}_{g_i}h_i(x_i) u_i \right), \label{eq:sec_der_h_i_control_aff}
\end{align}
\normalsize
which we use in the following definition and lemma.

\begin{definition}\cite[Collaborative Control Barrier Functions]{butler2024collaborativesafety}
    $h_i$ is a \textit{collaborative control barrier function} (CCBF) for node~$i \in [n]$ if there exists a feedback controller $k_i$, and $\forall \mathbf{x}_i \in \mathcal{C}_i^+$ there exists $(u_i, u_{\mathcal{N}_i^+}) \in \mathcal{U}_i \times \mathcal{U}_{\mathcal{N}_i^+}$ such that 
    \begin{equation} \label{eq:CCBF_cond}
        \dot{h}_i^+(\mathbf{x}_i^+, u_i, u_{\mathcal{N}_i^+}) + \beta_i h_i^+(\mathbf{x}_i)  \geq 0
    \end{equation}
    and
    \begin{equation}\label{eq:CCBF_cond_2}
        \begin{aligned}
             \mathcal{L}_{g_i}h_i(x_i) u_i +  \gamma_i h_i(x_i) \geq \mathcal{L}_{g_i}h_i(x_i) k_i(\mathbf{x}_i),
        \end{aligned}
    \end{equation}
    where $\alpha_i, \beta_i, \gamma_i \in \mathbb{R}_{\geq 0}$.
\end{definition}

\begin{lemma} \label{lem:CCBF}
    If $h_i$ is a CCBF, then, with  $(u_i, u_{\mathcal{N}_i^+})$  satisfying the conditions \eqref{eq:CCBF_cond} and \eqref{eq:CCBF_cond_2} applied, $\mathcal{C}_i$ is forward invariant. 
\end{lemma}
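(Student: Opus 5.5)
The plan is a two-stage comparison argument in the spirit of high-order CBFs \cite{tan2021high}: first show that $\mathcal{C}_i^+$ is forward invariant, then use this together with \eqref{eq:CCBF_cond_2} to show that $h_i$ stays nonnegative along trajectories. Throughout, the initial condition is taken in $\mathcal{C}_i \cap \mathcal{C}_i^+$. The lemma's phrasing, and the requirement in the definition that inputs exist for all $\mathbf{x}_i \in \mathcal{C}_i^+$, suggest this is the implicit assumption. The applied inputs are assumed to be locally Lipschitz, or at least measurable, in the state, so that closed-loop solutions of \eqref{eq:net_dyn_sys} exist on some interval $[0,T)$.

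\textbf{Step 1: invariance of $\mathcal{C}_i^+$.} Along any closed-loop solution, $\dot h_i^+$ is exactly the expression \eqref{eq:sec_der_h_i_control_aff} evaluated at the applied inputs. This holds because $h_i^+$ is built from the smooth $k_i$, so differentiating it brings in $u_j$ for every $j \in \mathcal{N}_i^+$ through $\dot x_j$. By \eqref{eq:CCBF_cond}, $\dot h_i^+(t) \ge -\beta_i h_i^+(t)$. The comparison lemma with $y'=-\beta_i y$ then gives $h_i^+(t) \ge e^{-\beta_i t} h_i^+(0) \ge 0$.

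The main obstacle is that \eqref{eq:CCBF_cond} is only guaranteed while $\mathbf{x}_i \in \mathcal{C}_i^+$. I would handle this with a first-exit-time argument. Let $t^*$ be the supremum of times up to which the trajectory stays in $\mathcal{C}_i^+$. On $[0,t^*]$ the differential inequality holds, so the bound above applies there. If $h_i^+(t^*)>0$, continuity keeps $h_i^+$ positive beyond $t^*$, which contradicts the definition of $t^*$. If $h_i^+(t^*)=0$, we need \eqref{eq:CCBF_cond} to remain enforced at the boundary and slightly beyond. The cleanest fix is to assume the inputs satisfy \eqref{eq:CCBF_cond} on an open neighborhood of $\mathcal{C}_i^+$, which continuity of the constraint data makes plausible. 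Alternatively, one can invoke Nagumo's theorem: at the boundary $\dot h_i^+ \ge 0$, so the vector field points into the set.

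\textbf{Step 2: invariance of $\mathcal{C}_i$.} Write
\[
\dot h_i = \mathcal{L}_{f_i}h_i + \mathcal{L}_{g_i}h_i\, u_i .
\]
By \eqref{eq:CCBF_cond_2}, $\mathcal{L}_{g_i}h_i\,u_i \ge \mathcal{L}_{g_i}h_i\,k_i - \gamma_i h_i$. Substituting and using the definition \eqref{eq:h_i+} gives
\[
\dot h_i \ge h_i^+ - (\alpha_i+\gamma_i) h_i .
\]
By Step 1, $h_i^+ \ge 0$ for all $t\in[0,T)$, so $\dot h_i \ge -(\alpha_i+\gamma_i)h_i$. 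A second application of the comparison lemma yields $h_i(t) \ge e^{-(\alpha_i+\gamma_i)t}h_i(0) \ge 0$. Hence $x_i(t)\in\mathcal{C}_i$ for all $t$ in the existence interval, which is forward invariance of $\mathcal{C}_i$ (indeed of $\mathcal{C}_i\cap\mathcal{C}_i^+$).

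For the write-up, I would present Step 1 with the exit-time or Nagumo argument explicitly, since that is the only delicate point. Step 2 is a short computation followed by the same comparison argument.
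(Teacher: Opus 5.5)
Your proposal is correct and follows the paper's proof. Both first get invariance of $\mathcal{C}_i^+$ from \eqref{eq:CCBF_cond}, then combine \eqref{eq:CCBF_cond_2} with $h_i^+\ge 0$ to obtain $\dot h_i \ge -(\alpha_i+\gamma_i)h_i$ and apply the comparison lemma. The paper simply asserts the first step, so your explicit assumption $\mathbf{x}_i(0)\in\mathcal{C}_i\cap\mathcal{C}_i^+$ and your exit-time argument add rigor rather than a different argument; use the open-neighborhood version, since the Nagumo route would also need a regularity condition such as $\nabla h_i^+\neq 0$ on the boundary.
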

\begin{proof}
We have by \eqref{eq:CCBF_cond} that $\mathcal{C}_i^+$ is forward invariant. Thus, by \eqref{eq:h_i+}, we have that 
\begin{equation*}
    \mathcal{L}_{f_i}h_i(\mathbf{x}_i(t)) + \mathcal{L}_{g_i}h_ik_i(\mathbf{x}_i(t)) +  \alpha_i h_i(x_i(t)) \geq 0, \forall t\geq 0.
\end{equation*}
Furthermore, from \eqref{eq:CCBF_cond_2},  we know
\begin{equation*}
\begin{aligned}
    \mathcal{L}_{f_i}h_i(\mathbf{x}_i) & + \mathcal{L}_{g_i}h_i(x_i)u_i +  (\gamma_i + \alpha_i) h_i(x_i) \\
    & \geq \mathcal{L}_{f_i}h_i(\mathbf{x}_i) + \mathcal{L}_{g_i}h_i(x_i) k_i(\mathbf{x}_i) + \alpha_i h_i(x_i) \geq 0.
\end{aligned}
\end{equation*}
For any initial state $x_i(0)\in \mathcal{C}_i$, since $\dot{h}_i \geq - (\gamma_i + \alpha_i) h_i$, we establish that $h_i(x_i(t))\geq 0, \forall t\geq 0$ based on comparison lemma~\cite[Lemma~3.4]{Khalil2002}. Thus, $\mathcal{C}_i$ is forward invariant.
\end{proof}

\begin{remark}\label{rem:k_i}
The above analysis takes the feedback controller $k_i(\mathbf{x}_i)$ as given. In fact,  $k_i(\mathbf{x}_i)$ can be a design choice.
One may pick a $k_i(\mathbf{x}_i)$ such that the new safety constraint $h_i^+ (\mathbf{x}_i) \geq 0$ is easy or even trivial to satisfy, by, for example, choosing the ``half-Sontag" smooth safety filter from \cite{cohen2023characterizing} 
$$k_i(\mathbf{x}_i)=  \lambda(a(\mathbf{x}_i) , b(\mathbf{x}_i)) \mathcal{L}_{g_i} h_i^{\top}, $$ with  $a(\mathbf{x}_i) =  \mathcal{L}_{f_i}h_i(\mathbf{x}_i) + \alpha_i h_i(x_i), b(x_i)= \| \mathcal{L}_{g_i}h_i(x_i)\|^2$ and 
$ \lambda(a,b) =   \frac{-a + \sqrt{a^2 + 0.1b^2}}{2b}.$ Other options include to choose $k_i(\mathbf{x}_i)= 0$ or a nominal controller. Different choice of $k_i(\mathbf{x}_i)$ may differ in how difficult to satisfy the two conditions  \eqref{eq:CCBF_cond} and \eqref{eq:CCBF_cond_2}.
\end{remark}

As a simple illustrative example, consider a multi-agent system of 1-dimensional single-integrator agents, with the position of each agent given by $x_i \in \mathbb{R}$ and dynamics given by $\dot{x}_i = u_i$.
Let the coupling dynamic between neighboring agents be described using a formation control law
\begin{equation}\label{eq:u_f_i}
    u^f_i(\mathbf{x}_i) = -\xi \sum_{j \in \mathcal{N}_i} (x_i - x_j)  - \Delta_{ij}, 
\end{equation}
where $\xi>0$ and $\Delta_{ij} = x_i^\delta - x_j^\delta$ is the desired relative position between agents~$i$ and $j$, respectively, with $x_i^\delta \in \mathbb{R}$. In this sense, we can consider the formation control law as an induced drift term, which is a function of neighboring agent states, and the control input $u_i$ as any modification to this term as $\dot{x}_i =  u^f_i(\mathbf{x}_i) + u_i$, where, in our formulation in Section~\ref{sec:coupled_dyn}, this would make $f_i(\mathbf{x}_i) = u^f_i(\mathbf{x}_i)$ and $g_i = 1$.
For a safety constraint, consider a CCBF candidate $h_i$ where agent~$i$ is required to stay within a given distance $r > 0$ of the origin as $h_i(x_i) = \frac{1}{2}\left(r^2 - \Vert x_i \Vert^2\right).$
Consider the simplest example with two agents ($n=2$), where
\begin{equation} \label{eq:2agent_1D_dyn}
    \begin{aligned}
        \dot{x}_1 = u^f_1(x_1, x_2) + u_1, \;\; \dot{x}_2 = u^f_2(x_1, x_2),
    \end{aligned}
\end{equation}
and $h_2(x_2) = \frac{1}{2}\left(r^2 - x_2^2\right)$. In this example, since there is no input for agent~$2$, we set $k_2(x_1, x_2) = 0$, making
\begin{equation} \label{eq:h_2_plus_1D_ex}
    h_2^+(x_1, x_2) = \mathcal{L}_{f_2}h_2(x_1, x_2) + \alpha_2 h_2(x_2),
\end{equation}
where $\mathcal{L}_{f_2}h_2(x_1, x_2) = - x_2 u^f_2(x_1, x_2)$
and $\alpha_2$ is a positive scalar parameter for a linear class-$\mathcal{K}$ function. Thus, the derivative of $h_2^+$ is calculated as

\vspace{-3.5ex}
\small
\begin{align}
    \dot{h}_2^+(x_1, x_2, u_1) &= \mathcal{L}_{f_1} \mathcal{L}_{f_2}  h_2(x_1, x_2) + \mathcal{L}_{g_1} \mathcal{L}_{f_2}  h_2(x_1, x_2) u_1 \nonumber \\ 
    & \quad + \mathcal{L}_{f_2}^2 h_2(x_1, x_2) + \alpha_2 \mathcal{L}_{f_2}h_2(x_1, x_2), \label{eq:h_2_plus_dot_1D_ex}
\end{align}
\normalsize
Therefore, assuming that $\vert x_2(0) \vert < r$, in order for agent~$2$ to satisfy its safety constraint, agent~$1$ must choose $u_1$ according to \eqref{eq:h_2_plus_1D_ex} and \eqref{eq:h_2_plus_dot_1D_ex} such that
$\dot{h}_2^+(x_1, x_2, u_1) + \beta_2 h_2^+(x_1, x_2) \geq 0$,
where $\beta_2 > 0$. Note that since there is no $u_2$ and $k_2(x_1, x_2) = 0$, the second condition of the CCBF in \eqref{eq:CCBF_cond_2} reduces to a standard safety condition of $\gamma_2 h_2(x_2) \geq 0$ scaled by $\gamma_2>0$. 

In the scenario shown in Figure~\ref{fig:1D_example}, without any intervention by agent~$1$, agent~$2$ will violate its safety constraint as the dynamics \eqref{eq:u_f_i} will drive the position of agent~$2$ such that $x_2 > r$. However, under the CCBF condition, agent~$1$ will increase its velocity away from agent~$2$ such that $\Vert x_1-x_2 \Vert \geq \Delta$  before $x_2 \geq r$. We see this behavior demonstrated via simulation in Figure~\ref{fig:1D_example}.
In this example, computing a safe control input $u_1$ for the system is somewhat trivial; however, as the system complexity grows (i.e., if $d>1$ and $n>2$, with each agent having inputs $u_i$ and safety constraints $h_i$ for $i \in [n]$), determining both safe and optimal control inputs systematically for all agents in a distributed manner becomes a challenging and computationally complex problem. 

\begin{figure}
    \centering
    \begin{subfigure}[c]{.49\columnwidth}
        \begin{overpic} [width=\textwidth]{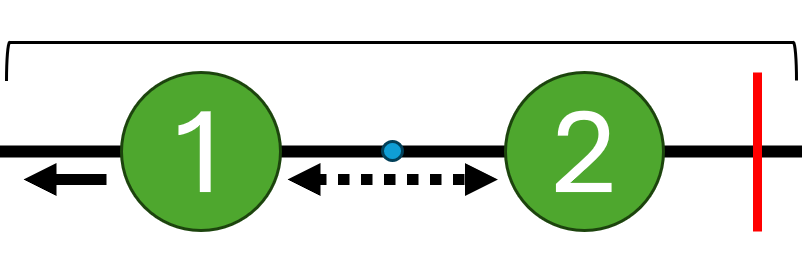}
            \put(46,29){$\Delta$}
            \put(92.5,-1){$r$}
            \put(7,-1){\small $u_1$}
            \put(32,-1){\small $u^f_i(x_1, x_2)$}
        \end{overpic}
    \end{subfigure}
    \begin{subfigure}[c]{.49\columnwidth}
        \includegraphics[trim= 13 10 10 10,clip, width=\textwidth]{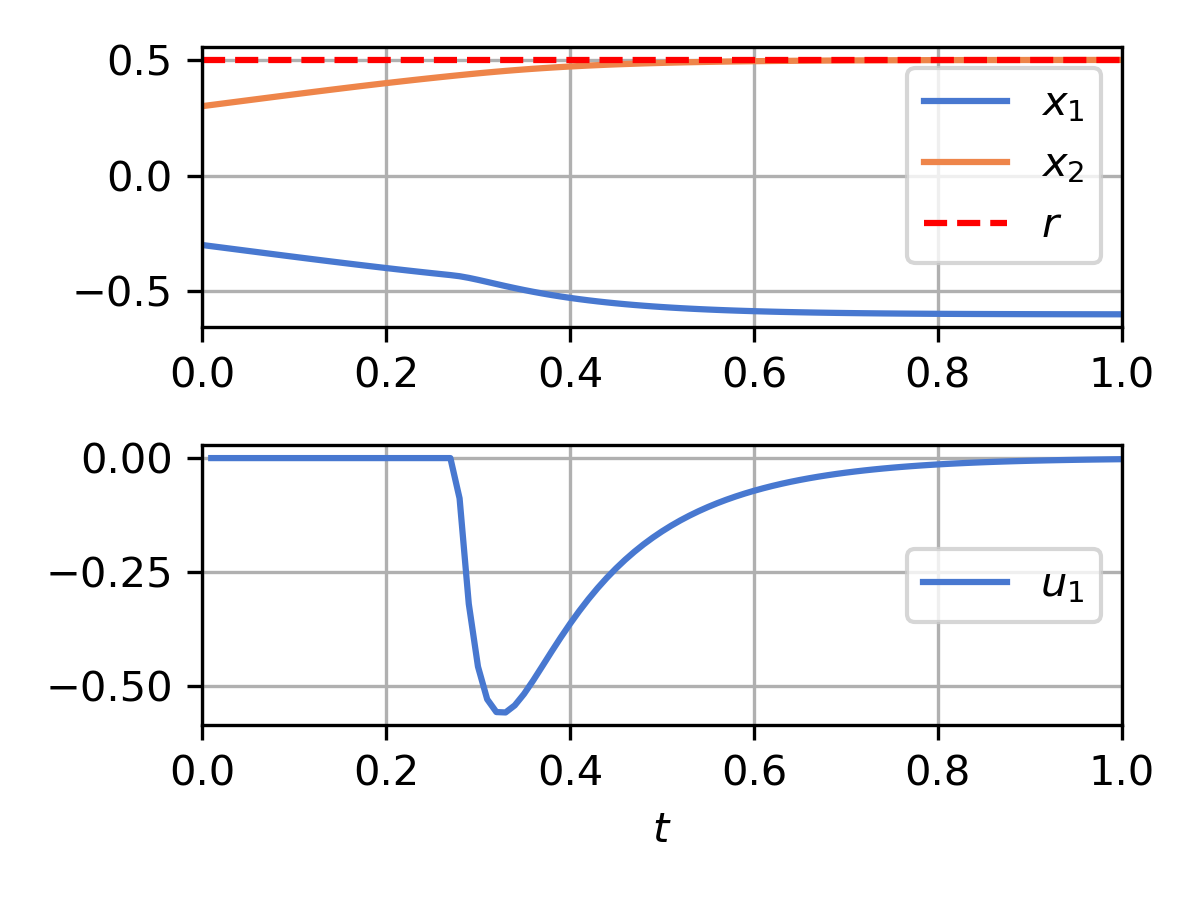}
    \end{subfigure}
    \captionsetup{belowskip=-15pt}
    \caption{(Left) An example of a simple 2-agent system where $d=1$ with dynamics defined by \eqref{eq:2agent_1D_dyn}. With no intervention by agent~$1$ through $u_1$, agent~$2$ will be driven by $u^f_2(x_1, x_2)$ to violate its safety constraint. (Right) A simulation of the example scenario shown on the left, with dynamics defined by \eqref{eq:2agent_1D_dyn} and agent~$1$ satisfying \eqref{eq:CCBF_cond} for all time, where $x_2(0) = -x_1(0) = 0.3$, $r = 0.5$, $\alpha_2 = \beta_2 = 10$, $\xi=2.5$, and $\Delta=1.4$.
    }
    \label{fig:1D_example}
\end{figure}  

\subsection{Distributed Optimization for Solving Safe Inputs}
In this subsection, we review a distributed optimization implementation from \cite{xiao2025continuous} to enforce the collaborative safety conditions. 
For notational simplicity, define
\begin{equation*}
\begin{aligned}
    & a_{ij}(\mathbf{x}_i) = \mathcal{L}_{g_j} \mathcal{L}_{f_i}  h_i(\mathbf{x}_i, x_j) +  \mathcal{L}_{g_j} (\mathcal{L}_{g_i}  h_i(x_i) k_i(\mathbf{x}_i)), \\
    & b_{ij}(\mathbf{x}_i,\mathbf{x}_j) = \mathcal{L}_{f_j} \mathcal{L}_{f_i}  h_i(\mathbf{x}_i, \mathbf{x}_j) +  \mathcal{L}_{f_j} (\mathcal{L}_{g_i}  h_i(x_i) k_i(\mathbf{x}_i)),
    \end{aligned}
\end{equation*}
for $i \neq j$, and
\begin{equation*}
    \begin{aligned}
    & a_{ii}(\mathbf{x}_i) = \mathcal{L}_{g_i} \mathcal{L}_{f_i}  h_i(\mathbf{x}_i) + \mathcal{L}_{g_i}^2  h_i(x_i) k_i(\mathbf{x}_i) + \alpha_i \mathcal{L}_{g_i}h_i(x_i) \\
        & b_{ii}(\mathbf{x}_i) = \mathcal{L}^2_{f_i} h_i(\mathbf{x}_i) + \mathcal{L}_{f_i} (\mathcal{L}_{g_i}  h_i(\mathbf{x}_i) k_i(\mathbf{x}_i)) \\
        &
        \quad \quad \quad \quad \quad \quad 
        + \alpha_i \mathcal{L}_{f_i}  h_i(\mathbf{x}_i) + \beta_i h_i^+(\mathbf{x}_i),
    \end{aligned}
\end{equation*}
for $i = j$. Here the term $ b_{ij}(\mathbf{x}_i,\mathbf{x}_j), j\in \mathcal{N}_i$ needs to be locally obtainable for agent~$i$. Assume that the agent~$i$ can communicate with its $2$-hop neighbors, where, recall, the notion of neighbors is defined based on coupling dynamics.

Then, the left hand side of \eqref{eq:CCBF_cond} can be rewritten as
\begin{equation}\label{eq:collab_safety_cond}
    \psi_i(\mathbf{x}_i^+,u_i,u_{\mathcal{N}_i^+}) = \sum_{j \in \mathcal{N}_i^+} a_{ij}(\mathbf{x}_i)^\top u_j  +  b_{ij}(\mathbf{x}_i,\mathbf{x}_j).
\end{equation}

 One common optimization-based safety filter design is 
\begin{equation} \label{eq:prob_statement}
    \begin{aligned}
        \min_{\mathbf{u}} & \quad \sum_{i \in [n]}  \Vert u_i - u_{i}^{nom} \Vert^2 \\
         \text{s.t} & \quad \psi_i(\mathbf{x}_i^+,u_i,u_{\mathcal{N}_i^+}) \geq 0 \\
         & \quad
         \mathcal{L}_{g_i}h_i(x_i) u_i +  \gamma_i h_i(x_i) \geq \mathcal{L}_{g_i}h_i(x_i) k_i(\mathbf{x}_i) , 
         \forall i \in [n]
    \end{aligned}
\end{equation}
where $\mathbf{u} = [u_{1}^\top; \cdots ;u_{n}^\top]$ and $u_i^{nom}$ represents the nominal control input for agent $i$. This formulation fits in the problem setting of \cite{xiao2025continuous}. Based on \cite[Proposition~1]{xiao2025continuous}, by introducing a set of auxiliary decision variables $y_j^i \in \mathbb{R}$ relating node~$j$ to the safety of node~$i$, we can describe \eqref{eq:prob_statement} with an equivalent quadratic problem as 
\begin{equation} \label{eq:prob_statement_aux_var}
    \begin{aligned}
        \min_{\mathbf{u}, \mathbf{y}_1, \dots, \mathbf{y}_n} & \quad \Vert \mathbf{u} - \mathbf{u}^{nom} \Vert^2 \\
         \text{s.t} & \quad \mathbf{a}_1^\top \mathbf{u}  + \sum_{j \in \mathcal{N}_1^+} (y_1^1 - y_j^1) + \mathbb{1}^\top \mathbf{b}_{1} \geq 0 \\
         & \quad \quad \quad \vdots \\
         & \quad 
         \mathbf{a}_n^\top \mathbf{u}  + \sum_{j \in \mathcal{N}_n^+} (y_n^n - y_j^n) +  \mathbb{1}^\top \mathbf{b}_{n} \geq 0 \\
         & \quad G \mathbf{u} \geq \mathbf{q},
    \end{aligned}
\end{equation}
where $\mathbf{a}_i^\top = [a_{i1}(\mathbf{x}_1)^\top; \cdots ;a_{in}(\mathbf{x}_n)^\top]$, $\mathbf{b}_{i} = [b_{i1}(\mathbf{x}_i,\mathbf{x}_1); \cdots ; b_{in}(\mathbf{x}_i,\mathbf{x}_n)]^\top$, with $a_{ij} = \mathbf{0}$ and $b_{ij} = 0$ if $j \notin \mathcal{N}_i^+$, $G = \text{blkdiag}([\mathcal{L}_{g_i}h_i(x_i)]_{i \in [n]})$, $\mathbf{q} = [\mathcal{L}_{g_i}h_i(x_i) k_i(\mathbf{x}_i)-\gamma_i h_i(x_i)]_{i \in [n]}$.

Moreover, using techniques developed in \cite{xiao2025continuous}, we can solve the above optimization problem in a distributed manner: for   each agent~$i \in [n]$, it solves the following problem

\vspace{-2ex}
\small
\begin{align}
    &\phi_i(\mathbf{y}) := \min_{u_i \mathbf{y}_i} \quad \Vert u_i - u_{i}^{nom} \Vert^2 \nonumber\\
     \text{s.t.} & \quad a_{ji}(\mathbf{x}_j)^\top u_i  + \sum_{k \in \mathcal{N}_j^+} (y_i^j - y_k^j) +  b_{ji}(\mathbf{x}_j,\mathbf{x}_i) \geq 0
     ;\forall j \in \mathcal{N}_i^-, \nonumber\\
     & \quad
     \mathcal{L}_{g_i}h_i(x_i) u_i +  \gamma_i h_i(x_i) \geq \mathcal{L}_{g_i}h_i(x_i) k_i(\mathbf{x}_i) \label{eq:prob_statement_distr}
\end{align}
\normalsize
and update the variables $y_i^j$ using the following update law
\begin{equation} \label{eq:y_updates}
    \dot{y}_i^j = 
    \begin{cases}
        -k_0 \sum_{k \in \mathcal{N}_j^+} (c_i^j - c_k^j) & \text{if } j \in \mathcal{N}_i^- \\
        0 & \text{if } j \notin \mathcal{N}_i^-
    \end{cases}
\end{equation}
Here $k_0>0$ is a gain parameter, $c_i = (c_i^1, \dots, c_i^n)$ where $c_i^j$ is the Lagrange multiplier of \eqref{eq:prob_statement_distr} that corresponds to the condition $ a_{ji}(\mathbf{x}_j)^\top u_i  + \sum_{k \in \mathcal{N}_i \cap \mathcal{N}_j^+} (y_i^j - y_k^j) +  b_{ji}(\mathbf{x}_j,\mathbf{x}_i) \geq 0$ if $j \in \mathcal{N}_i^-$, and is equal to $0$ otherwise. We note that \eqref{eq:prob_statement_distr} and \eqref{eq:y_updates} are distributed in nature since each node $i$ can communicate  $y^j_k$s and $c^j_k$s with its $2$-hop neighbors $k$.

\begin{lemma}[\hspace{-0.1pt}\cite{xiao2025continuous}] \label{lem:distributed_optimization}
    Suppose that the local optimization problem \eqref{eq:prob_statement_distr} is always feasible and that the solution to the ODE \eqref{eq:y_updates} is forward complete. Then
    \begin{enumerate}
        \item the locally computed inputs $u_i$ from \eqref{eq:prob_statement_distr} satisfy the constraints in the centralized problem \eqref{eq:prob_statement} for all time;
        \item When viewing  \eqref{eq:prob_statement} as a static optimization problem, the locally computed inputs $u_i(t)\to u_i^\star$ as $t\to\infty$ where $u_i^\star$ denotes the optimal input to the central problem. Moreover, the convergence rate can be adjusted by choosing a different $k_0$.
    \end{enumerate}
\end{lemma}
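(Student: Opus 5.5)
Part 1 will be proved by summing the local constraints and observing that the auxiliary variables cancel. Part 2 will be proved with a Lyapunov argument on the $\mathbf{y}$-dynamics \eqref{eq:y_updates}, which can be read as a gradient flow on a dual function.

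\emph{Part 1.} Fix $t\ge 0$ and let $(u_i(t),\mathbf{y}(t))$ be the local solutions of \eqref{eq:prob_statement_distr}; feasibility is assumed. For a fixed agent $j$, each $i\in\mathcal{N}_j^+$ satisfies $j\in\mathcal{N}_i^-$, so agent $i$ enforces
$$a_{ji}^\top u_i+\sum_{k\in\mathcal{N}_j^+}(y_i^j-y_k^j)+b_{ji}\ge 0 .$$
Summing this over $i\in\mathcal{N}_j^+$ gives $\psi_j(\mathbf{x}_j^+,u_j,u_{\mathcal{N}_j^+})$ plus the term $\sum_{i\in\mathcal{N}_j^+}\sum_{k\in\mathcal{N}_j^+}(y_i^j-y_k^j)$. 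This double sum is antisymmetric in $(i,k)$ over the same index set, so it vanishes. Hence $\psi_j\ge 0$, which is exactly the first constraint of \eqref{eq:prob_statement}. The constraint $G\mathbf{u}\ge\mathbf{q}$ is imposed verbatim in each local problem. So the concatenated $\mathbf{u}(t)$ is feasible for \eqref{eq:prob_statement} at every $t$, and forward completeness ensures this holds for all time.

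\emph{Part 2.} \emph{Step 1.} Define $\Phi(\mathbf{y})=\sum_i\phi_i(\mathbf{y})$. By \cite[Proposition~1]{xiao2025continuous}, $\min_{\mathbf{y}}\Phi$ equals the optimal value of \eqref{eq:prob_statement_aux_var} and hence of \eqref{eq:prob_statement}. The objective is separable in $i$ once $\mathbf{y}$ is fixed, so the centralized optimum is recovered at any minimizer $\mathbf{y}^\star$ of $\Phi$.

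\emph{Step 2.} Each $\phi_i$ is the value function of a strongly convex QP whose right-hand sides are affine in $\mathbf{y}$. Therefore $\phi_i$ is convex in $\mathbf{y}$. By sensitivity analysis (Danskin), its gradient is given by the negative multipliers weighted by the constraint coefficients. For the variable $y_i^j$ this gives
$$\partial\Phi/\partial y_i^j=-\sum_{k\in\mathcal{N}_j^+}(c_i^j-c_k^j),$$
collecting the contributions from agent $i$'s own constraint and from the neighbors' constraints. Thus \eqref{eq:y_updates} is $\dot{\mathbf{y}}=-k_0\nabla\Phi(\mathbf{y})$ on the active coordinates.

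\emph{Step 3.} Along trajectories, $\dot\Phi=-k_0\|\nabla\Phi\|^2\le 0$. With convexity this yields $\Phi(\mathbf{y}(t))\to\min\Phi$, and $k_0$ scales the time, so it sets the convergence rate. The map $\mathbf{y}\mapsto u_i$ is the unique minimizer of a strongly convex problem, and the strong convexity of $\|u_i-u_i^{nom}\|^2$ gives a quadratic-growth bound
$$\|\mathbf{u}(\mathbf{y})-\mathbf{u}^\star\|^2\lesssim \Phi(\mathbf{y})-\Phi^\star,$$
using that $\mathbf{u}(\mathbf{y})$ is feasible for the centralized problem by Part 1. Hence $u_i(t)\to u_i^\star$.

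\emph{Main obstacle.} The hard step is Step 2, because $\phi_i$ need not be differentiable. Where multipliers are nonunique (degenerate active sets), the multipliers only give a subgradient. My first attempt is to assume LICQ for the local problems so that the multipliers are unique and $\phi_i$ is $C^1$. Failing that, I would treat \eqref{eq:y_updates} as a subgradient differential inclusion and invoke standard convergence results for convex nonsmooth gradient flows.
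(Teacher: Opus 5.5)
The paper does not prove this lemma itself; it imports it from \cite[Theorem~2]{xiao2025continuous}. Your Part~1 is correct: summing the split constraints over $i\in\mathcal{N}_j^+$ makes the $y$-terms cancel by antisymmetry. Your plan for Part~2 is the natural one, but it has a sign error at exactly the step everything rests on.

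Your gradient formula $\partial\Phi/\partial y_i^j=-\sum_{k\in\mathcal{N}_j^+}(c_i^j-c_k^j)$ is right. Raising $y_i^j$ relaxes agent $i$'s own constraint for $j$ (coefficient $|\mathcal{N}_j^+|-1$) and tightens that of every other $k\in\mathcal{N}_j^+$ (coefficient $-1$). With nonnegative multipliers, the envelope theorem gives that expression. But then the printed law \eqref{eq:y_updates}, $\dot y_i^j=-k_0\sum_{k}(c_i^j-c_k^j)$, equals $+k_0\,\partial\Phi/\partial y_i^j$. It is gradient \emph{ascent}, so $\dot\Phi=+k_0\|\nabla\Phi\|^2\ge 0$, not $\le 0$. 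Under that reading Step~3 fails outright: $\Phi(\mathbf{y}(t))=\|\mathbf{u}(t)-\mathbf{u}^{nom}\|^2$ would be nondecreasing, so $\mathbf{u}(t)$ could never reach $\mathbf{u}^\star$ unless $\mathbf{y}(0)$ were already optimal. Intuitively, when $c_i^j$ exceeds the other $c_k^j$ (agent $i$'s share of $j$'s constraint is the binding one), the printed law shrinks $y_i^j$ and tightens that constraint further.

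The printed law is the descent direction only when the $y$-term enters the local constraint with the opposite sign, i.e.\ $a_{ji}^\top u_i-\sum_k(y_i^j-y_k^j)+b_{ji}\ge 0$ (the $\le$-form splitting). You must handle this convention explicitly. Either flip the sign of the update, flip the sign of the $y$-term in \eqref{eq:prob_statement_distr}, or take the $c_i^j$ nonpositive. You cannot simply assert that the law as printed is $-k_0\nabla\Phi$.

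With the sign corrected, the rest goes through, and your ``main obstacle'' is not really one: you do not need LICQ.
\begin{itemize}
\item For any choice of optimal local multipliers, QP duality shows the resulting vector is a subgradient of $\phi_m$. Subgradients of the $\phi_m$ add to a subgradient of $\Phi$. So the corrected right-hand side is always $-k_0g(t)$ with $g(t)\in\partial\Phi(\mathbf{y}(t))$.
\item Take a minimizer $\mathbf{y}^\star$; it exists by splitting $\mathbf{u}^\star$, the converse direction of \cite[Proposition~1]{xiao2025continuous}. Then $V=\tfrac12\|\mathbf{y}-\mathbf{y}^\star\|^2$ satisfies $\dot V\le -k_0(\Phi(\mathbf{y})-\Phi^\star)$ by the subgradient inequality.
\item Combined with monotonicity of $\Phi(\mathbf{y}(t))$ (chain rule for convex functions along absolutely continuous curves), this gives $\Phi(\mathbf{y}(t))-\Phi^\star\le V(0)/(k_0t)$.
\end{itemize}
This distance-to-minimizer Lyapunov function is what Step~3 actually needs; $\dot\Phi\le 0$ plus convexity alone does not imply $\Phi\to\min\Phi$. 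Your quadratic-growth bound $\|\mathbf{u}(\mathbf{y})-\mathbf{u}^\star\|^2\le\Phi(\mathbf{y})-\Phi^\star$ then closes the argument. It follows from first-order optimality of $\mathbf{u}^\star$ and feasibility of $\mathbf{u}(\mathbf{y})$ via Part~1.
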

The above results are from \cite[Theorem~2]{xiao2025continuous}. One practical approach to apply this distributed result is to solve \eqref{eq:prob_statement_distr} and \eqref{eq:y_updates} at a faster rate, and execute $u_i$ at a slower rate.

\subsection{\textbf{Example Continued:} Two Safety Constraints} \label{sec:sub:two_constraints}
Returning to our example from Section~\ref{sec:1D_example}, let us now consider the case where
\begin{equation} \label{eq:2agent_1D_dyn_2inputs}
    \begin{aligned}
        \dot{x}_1 = u^f_1(x_1, x_2) + u_1, \;\; \dot{x}_2 = u^f_2(x_1, x_2) + u_2,
    \end{aligned}
\end{equation}
with both $h_1(x_1) = \frac{1}{2}\left(r^2 - x_1^2\right)$ and $h_2(x_2) = \frac{1}{2}\left(r^2 - x_2^2\right)$. In this case, we can utilize the distributed optimization framework from \eqref{eq:prob_statement_distr} to solve for the optimal safe action that satisfies both agent safety constraints.  
We test two choices of $k_i(\mathbf{x}_i)$ in simulation; namely, $k_i = 0$ and $k_i(\mathbf{x}_i)=  \lambda(a(\mathbf{x}_i) , b(\mathbf{x}_i)) \mathcal{L}_{g_i} h_i^{\top}$ (i.e., the ``half-Sontag" control law described in Remark~\ref{rem:k_i}), which are shown in in Figure~\ref{fig:1D_example_ki}. Note that in Figure~\ref{fig:sub:ki_0}, the choice of $k_i = 0$ induces conservative behavior for both agents, i.e., there is a gap between the position of the agents and the safety boundary. In Figure~\ref{fig:sub:ki_hsontag} when we choose $k_i(\mathbf{x}_i)=  \lambda(a(\mathbf{x}_i) , b(\mathbf{x}_i)) \mathcal{L}_{g_i} h_i^{\top}$ (which could be considered the solution to the smallest $u_i$ that keeps agent~$i$ safe in the first derivative of $h_i$), each agent approaches the safety boundary and the distributed optimization problem always remains feasible through the local auxiliary variable updates.

 However, the above approach requires the feasibility of the local optimization problem~\eqref{eq:prob_statement_distr}, which can be restrictive in many scenarios. In Section~\ref{sec:altruistic_safety}, we propose a method for handling this restriction from an altruistic safety perspective.

\begin{figure}
    \centering
    \begin{subfigure}[t]{.49\columnwidth}
        \includegraphics[trim= 13 10 10 10,clip, width=\textwidth]{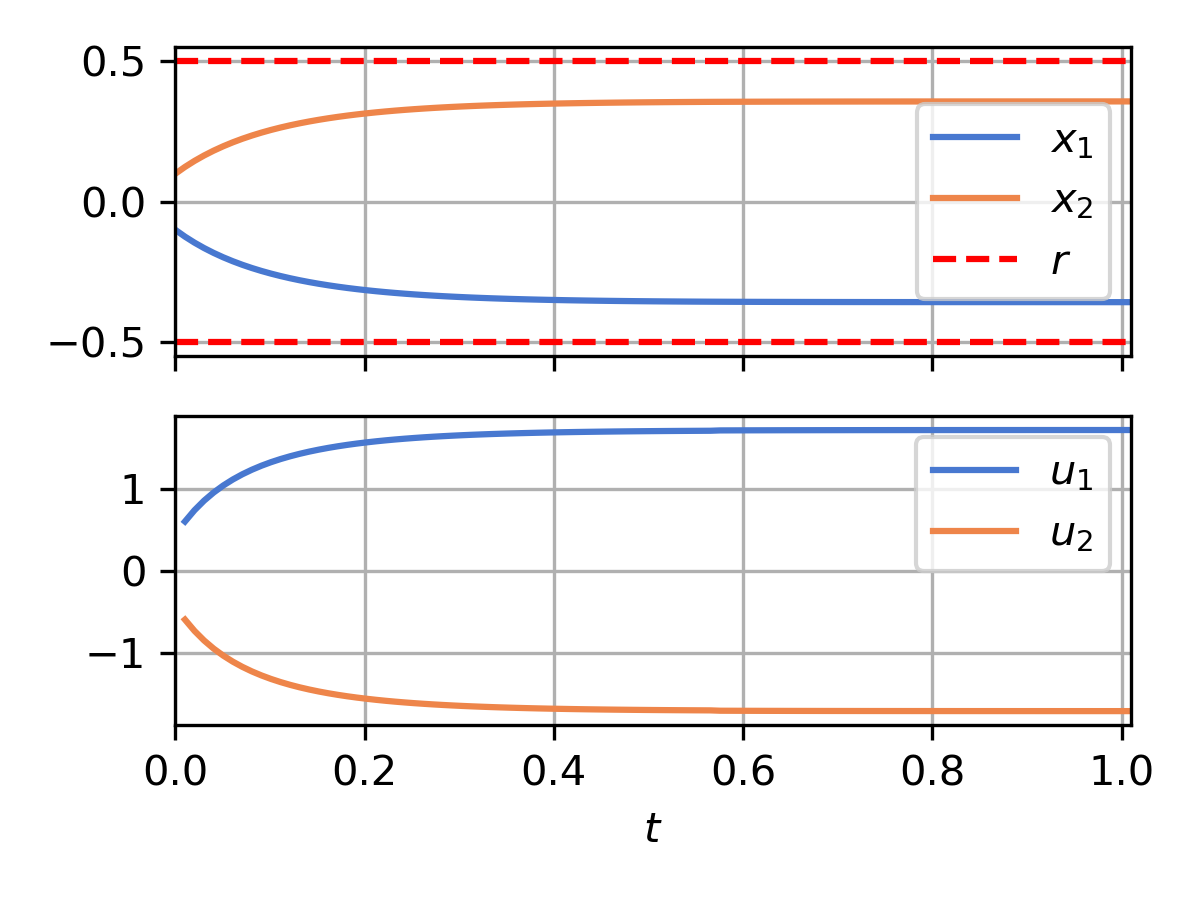}
        \caption{}
        \label{fig:sub:ki_0}
    \end{subfigure}
    \begin{subfigure}[t]{.49\columnwidth}
        \includegraphics[trim= 13 10 10 10,clip, width=\textwidth]{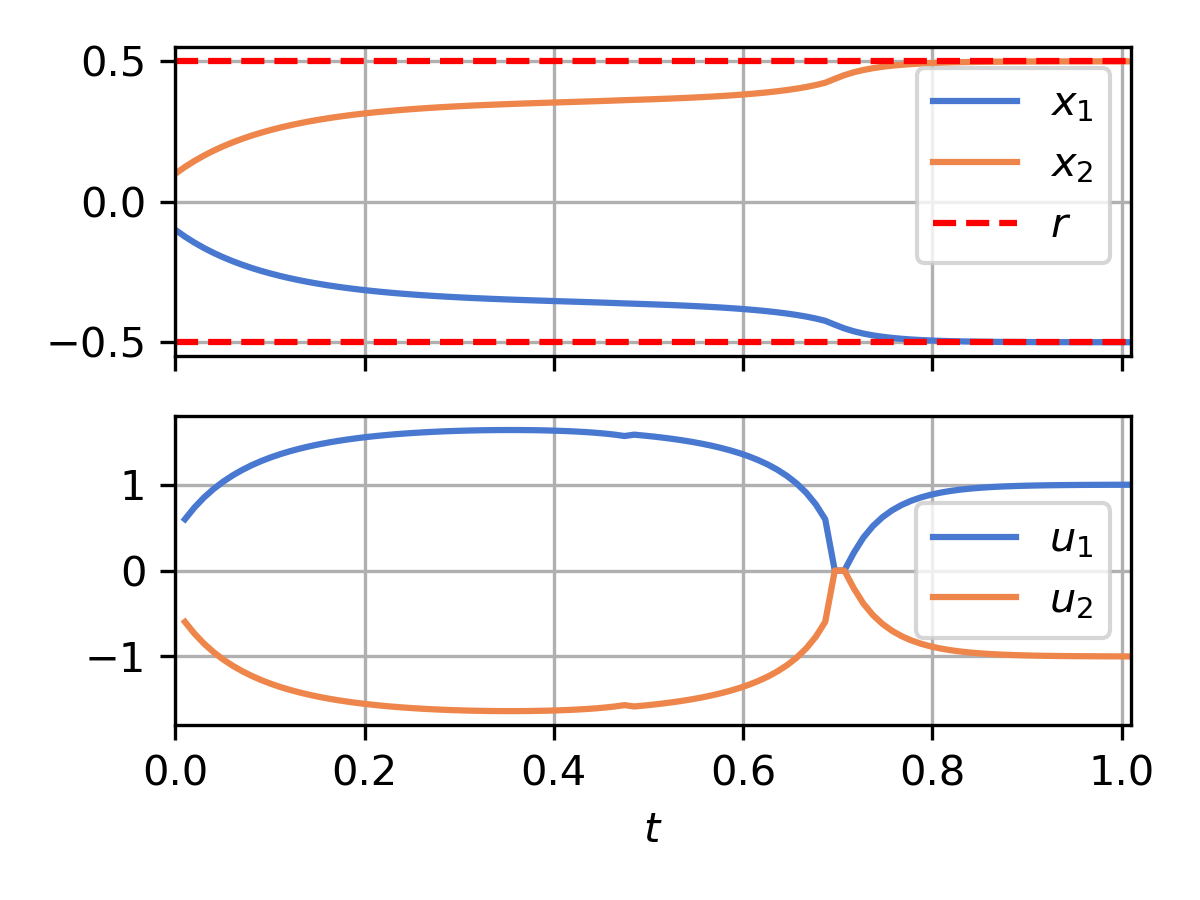}
        \caption{}
        \label{fig:sub:ki_hsontag}
    \end{subfigure}
    \captionsetup{belowskip=-15pt}
    \caption{A simulation with the same initial conditions and parameterization as the simulation in Figure~\ref{fig:1D_example}, but with dynamics defined by \eqref{eq:2agent_1D_dyn_2inputs}. In (a), $k_1(x_1,x_2) = k_2(x_1,x_2) = 0$, whereas in (b), $k_i(\mathbf{x}_i)$ is defined by the ``half-Sontag" feedback control law for both agents, as described in Remark~\ref{rem:k_i}. The control inputs $u_1, u_2$ are obtained by solving \eqref{eq:prob_statement_distr} with auxiliary variable updates defined by \eqref{eq:y_updates}.
    }
    \label{fig:1D_example_ki}
\end{figure}

\section{Altruistic Safety in Coupled Systems} \label{sec:altruistic_safety}
In ecology, \textit{genetic fitness} is used to quantify the reproductive success of a given organism \cite{orr2009fitness}. \textit{Inclusive fitness} extends this notion of genetic success to include organisms with the same, or similar, genes, where individuals may take \textit{altruistic} actions that support the survival of each other, even at a cost to themselves, to theoretically enhance the genetic fitness of both the recipient of the act and the altruistic organism. Hamilton's rule \cite{hamilton1963evolution} underpins this theory of inclusive fitness by deriving a condition under which altruistic genes are likely to propagate throughout a given population as $r_{ij} B_j(u_i) \geq C_i(u_i)$,
where, typically, $C_i(u_i)$ is the reproductive cost to organism $i$ of a given choice $u_i$, $B_j(u_i)$ is the benefit of the choice $u_i$ to organism $j$, and $r_{ij}$ is the genetic relatedness between the to organisms, which, in the ecological setting, {means} the probability that $i$ and $j$ share the same genes.

In previous work \cite{butler2025hamilton}, rather than using fitness to describe agent \textit{fecundity} (i.e., reproductive rate) of, agent fitness is instead considered as a measure of \textit{productivity} (i.e., task/goal completion rate \cite{nguyen2024scalable}). However, in this work, we seek to extend this same notion of fitness to agent safety, where agents may choose to take on additional safety risks to support the safety of more critical neighbors.

\subsection{Safety-Relatedness in Coupled Systems}
We consider the relatedness between nodes in the context of quantifying safety importance through
agent safety sensitivity. 
Let each agent receive a weight $w_i \geq 0$ that denotes importance based on the sensitivity of a given agent to individual failure. One way we could compute this is to evaluate how close a given agent is to violating its defined safety condition $h_i$ as
\begin{equation} \label{eq:w_i^s}
    w_i(x_i) = \frac{\eta_i}{h_i(x_i)},
\end{equation}
where $\eta_i \geq 0$ may be considered a scalar bias towards the safety of agent~$i$. 
Note that this metric of safety importance is not exhaustive and could be further designed to reflect the specific safety needs of a given system.
We can compute the relatedness between agents in the context of safety importance as
\begin{equation} \label{eq:r_ij}
    r_{ij} = \frac{w_j(x_j)}{w_i(x_i)},
\end{equation}
where $r_{ij} = 1$ implies the safety of agent~$i$ and $j$ is equally important, $0 < r_{ij} < 1$ implies agent~$i$'s safety is more critical relative to agent~$j$, and vice versa for $r_{ij} > 1$.

\subsection{Conditions for Altruistic Safety}
Using the collaborative safety condition from \eqref{eq:collab_safety_cond}, derived in Section~\ref{sec:coupled_safety_cond}, we can construct Hamilton's rule-like candidate functions that describe the safety cost and benefit of a given input $u_i$ concerning the safety for agent~$i$ and its neighbors $j\in \mathcal{N}_i^-$, respectively, as follows
\begin{equation}\label{eq:safety_cost_i}
    C_i(\mathbf{x}_i,u_i) := -a_{ii}(\mathbf{x}_i)^\top u_i - b_{ii}(\mathbf{x}_i)
\end{equation}
\begin{equation}\label{eq:safety_benefit_ij}
    B_{ij}(\mathbf{x}_i,\mathbf{x}_j,u_i) := a_{ji}(\mathbf{x}_j)^\top u_i + b_{ji}(\mathbf{x}_j,\mathbf{x}_i), 
\end{equation}
where $C_i(\mathbf{x}_i,u_i) \geq 0$ implies $u_i$ contributes positively (i.e., a negative cost) towards satisfying its safety condition \eqref{eq:collab_safety_cond} and $B_{ij}(\mathbf{x}_i,\mathbf{x}_j,u_i) \geq 0$ implies $u_i$ contributes positively to agent~$j$'s safety condition.
Thus, using \eqref{eq:safety_cost_i} and \eqref{eq:safety_benefit_ij}, we construct an altruistic safety condition for agent~$i$ with respect to neighbors $j \in \mathcal{N}_i^-$ as
\begin{equation}\label{eq:hams_rule_N_i-}
    \sum_{j \in \mathcal{N}_i^-} r_{ij}B_{ij}(\mathbf{x}_i,\mathbf{x}_j,u_i) \geq C_i(\mathbf{x}_i,u_i).
\end{equation}
Due to the linear coupling constraints defined in Section~\ref{sec:coupled_safety_cond}, we can express \eqref{eq:hams_rule_N_i-} as a linear inequality with respect to $u_i$,
\begin{equation} \label{eq:hams_rule_lin_eq}
    \begin{aligned}
       \left( \sum_{j \in \mathcal{N}_i^-} r_{ij}  a_{ji}(\mathbf{x}_j)  \right)^\top u_i 
        \geq  - \sum_{j \in \mathcal{N}_i^-} r_{ij} b_{ji}(\mathbf{x}_j,\mathbf{x}_i),
    \end{aligned}   
\end{equation}
where, again, note that $i \in \mathcal{N}_i^-$ and $r_{ii} = \frac{w_i}{w_i} = 1$.

Consider the set of safe inputs for agent~$i$ according to the collaborative safety condition from \eqref{eq:collab_safety_cond} given a set of neighbor inputs $u_{\mathcal{N}_i}$
\begin{equation} 
    \mathcal{U}_i^s(\mathbf{x}_i^+, u_{\mathcal{N}_i}) = \{ u_i \in \mathbb{R}^{M_i}: \psi_i(\mathbf{x}_i^+, u_i, u_{\mathcal{N}_i}) \geq 0 \}.
\end{equation}
Note that by \eqref{eq:collab_safety_cond}, any $u_i \in \mathcal{U}_i^s(\mathbf{x}_i^+, u_{\mathcal{N}_i})$  must satisfy

\small
\begin{equation}\label{eq:safe_cond_separated}
    \begin{aligned}
        a_{ii}(\mathbf{x}_i)^\top u_i + b_{ii}(\mathbf{x}_i) + \sum_{j \in \mathcal{N}_i^+ \setminus \{i\}} a_{ij}(\mathbf{x}_i)^\top u_j  +  b_{ij}(\mathbf{x}_i,\mathbf{x}_j) \geq 0,
    \end{aligned}
\end{equation}
\normalsize
given a set of neighbor inputs $u_{\mathcal{N}_i}$.
Further, consider the set of inputs for agent~$i$ that satisfy the altruistic safety condition for neighbors $j \in \mathcal{N}_i^-$
\begin{equation}
    \mathcal{U}_i^a(\mathbf{x}_i^+) = \{ u_i \in \mathbb{R}^{M_i}: \eqref{eq:hams_rule_lin_eq} \}.
\end{equation}

We first show that, so long as an agent's safety importance is large when compared with its neighbors, any input $u_i$ that satisfies \eqref{eq:hams_rule_lin_eq} 
also yield a safe action for agent~$i$.

\begin{proposition} \label{prop:alt_is_safe}
When $w_i(x_i) \gg w_j(x_j), \forall j \in \mathcal{N}_j^-$: 
\begin{enumerate}
    \item For any $u_{\mathcal{N}_i}$ such that
    $$
    \sum_{j \in \mathcal{N}_i^+ \setminus \{i\}} a_{ij}(\mathbf{x}_i)^\top u_j  +  b_{ij}(\mathbf{x}_i,\mathbf{x}_j) \geq 0,
    $$
    we have $\mathcal{U}_i^a(\mathbf{x}_i^+) \subseteq \mathcal{U}_i^s(\mathbf{x}_i^+, u_{\mathcal{N}_i})$.
    \item For any $u_{\mathcal{N}_i}$ such that $\mathcal{U}_i^s(\mathbf{x}_i^+, u_{\mathcal{N}_i}) \neq \emptyset$, we have $\mathcal{U}_i^a(\mathbf{x}_i^+) \cap \mathcal{U}_i^s(\mathbf{x}_i^+, u_{\mathcal{N}_i}) \neq \emptyset$.
\end{enumerate}
\end{proposition}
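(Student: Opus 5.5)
The plan is to make ``$w_i(x_i) \gg w_j(x_j)$'' precise as the limit $r_{ij} = w_j(x_j)/w_i(x_i) \to 0$ for every $j \in \mathcal{N}_i^- \setminus \{i\}$, while $r_{ii} = 1$. (We read the quantifier in the statement as ranging over $j \in \mathcal{N}_i^-$.) In this limit we show that the altruistic condition \eqref{eq:hams_rule_lin_eq} reduces to agent~$i$'s own self-term, and then compare that with the separated safety condition \eqref{eq:safe_cond_separated}.

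The first step is to split off the $j=i$ term in \eqref{eq:hams_rule_lin_eq}, which gives
\begin{equation*}
a_{ii}(\mathbf{x}_i)^\top u_i + b_{ii}(\mathbf{x}_i) + \sum_{j \in \mathcal{N}_i^- \setminus \{i\}} r_{ij}\big(a_{ji}(\mathbf{x}_j)^\top u_i + b_{ji}(\mathbf{x}_j,\mathbf{x}_i)\big) \geq 0 .
\end{equation*}
This is the same as $-C_i(\mathbf{x}_i,u_i) + \sum_{j\neq i} r_{ij}B_{ij} \geq 0$. As $r_{ij}\to 0$, with the states fixed so that $a_{ji}$ and $b_{ji}$ are finite, the set $\mathcal{U}_i^a(\mathbf{x}_i^+)$ becomes the half-space $\{u_i : a_{ii}^\top u_i + b_{ii} \geq 0\}$.

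For item~1, take $u_i$ in this half-space and suppose the neighbor sum $\sum_{j \in \mathcal{N}_i^+\setminus\{i\}} a_{ij}^\top u_j + b_{ij}$ is nonnegative. Adding the two nonnegative quantities gives exactly \eqref{eq:safe_cond_separated}, so $u_i \in \mathcal{U}_i^s(\mathbf{x}_i^+, u_{\mathcal{N}_i})$.

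For item~2, let $S$ denote the neighbor sum, which may now be negative. The set $\mathcal{U}_i^s$ is the half-space $\{a_{ii}^\top u_i \geq -b_{ii} - S\}$ and, in the limit, $\mathcal{U}_i^a$ is $\{a_{ii}^\top u_i \geq -b_{ii}\}$. If $a_{ii} \neq 0$, these are two half-spaces with the same normal, so they are nested. Their intersection is the smaller of the two, hence nonempty; an explicit witness is $u_i = t\,a_{ii}$ with $t$ large. If $a_{ii} = 0$, nonemptiness of $\mathcal{U}_i^s$ means $b_{ii} + S \geq 0$, but $\mathcal{U}_i^a$ additionally needs $b_{ii} \geq 0$. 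In that degenerate case I would argue that $\mathcal{U}_i^a$ is interpreted as nonempty; otherwise item~2 can fail, so this case may need an added assumption or must be excluded.

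The main obstacle is making ``$\gg$'' rigorous without the limit: that is, giving a finite threshold on the $r_{ij}$ for which the inclusions hold. Item~1 is genuinely only asymptotic. For any fixed small $r_{ij} > 0$, a $u_i$ with large $\|u_i\|$ could have a negative self-term compensated by $\sum r_{ij} a_{ji}^\top u_i$, which would violate the inclusion. Two remedies are available. One is to restrict to a bounded input set $\mathcal{U}_i$ and choose $r_{ij} \le \epsilon$, with $\epsilon$ small relative to $\sup_{u_i\in\mathcal{U}_i}|B_{ij}|$ divided by the available margin. The other is to present the result as holding in the limit $w_i/w_j \to \infty$ (e.g.\ $h_i \to 0^+$ in \eqref{eq:w_i^s}). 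For item~2 I would use the witness direction $u_i = t\,a_{ii}$. Along this direction the self-term grows like $t\|a_{ii}\|^2$, and this dominates the perturbation $t\sum r_{ij} a_{ji}^\top a_{ii}$ as soon as $\sum_j r_{ij}\|a_{ji}\| < \|a_{ii}\|$. That gives an explicit nonasymptotic threshold for item~2.
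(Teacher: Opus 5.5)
Your proposal is correct and follows essentially the paper's route. The paper also collapses $\mathcal{U}_i^a(\mathbf{x}_i^+)$ to the self-term half-space $\{a_{ii}^\top u_i + b_{ii} \ge 0\}$ (it carries a loose $+\epsilon$, which strictly gives only $\ge -\epsilon$ in item~1, so your exact-limit reading is cleaner), proves item~1 by adding the nonnegative neighbor sum, and proves item~2 by a case split on the sign of that sum, which is exactly your observation that two half-spaces with common normal $a_{ii}$ are nested.

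Your $a_{ii}=0$ caveat is a real hole that the paper skips by asserting $\mathcal{U}_i^a(\mathbf{x}_i^+)$ is ``by definition a closed and non-empty halfspace''. One small fix to your aside: the bounded-input remedy for item~1 needs $\sum_{j\neq i} r_{ij}\sup_{u_i\in\mathcal{U}_i}|B_{ij}| \le S$, where $S$ is the neighbor sum. So the threshold on $r_{ij}$ is the margin divided by $\sup|B_{ij}|$, not the reverse, and it requires $S>0$ strictly.
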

\begin{proof}
    When $w_i(x_i) \gg w_j(x_j), \forall j \in \mathcal{N}_j^-$ we have by \eqref{eq:r_ij} that the set $\mathcal{U}_i^a(\mathbf{x}_i^+)$  reduces to
    \begin{equation}\label{eq:lem_alt_cond}
      \mathcal{U}_i^a(\mathbf{x}_i^+) = \{ u_i \in \mathbb{R}^{M_i}:  a_{ii}(\mathbf{x}_i)^\top u_i + b_{ii}(\mathbf{x}_i) + \epsilon \geq 0\},
    \end{equation}
    where $\epsilon$ is arbitrarily small in magnitude. Therefore, when $\sum_{j \in \mathcal{N}_i^+ \setminus \{i\}} a_{ij}(\mathbf{x}_i)^\top u_j  +  b_{ij}(\mathbf{x}_i,\mathbf{x}_j) \geq 0$,
    for any $u_i \in  \mathcal{U}_i^a(\mathbf{x}_i^+)$, it follows that $a_{ii}(\mathbf{x}_i) u_i + b_{ii}(\mathbf{x}_i,\mathbf{x}_j) + \sum_{j \in \mathcal{N}_i^+ \setminus \{i\}} a_{ij}(\mathbf{x}_i)^\top u_j  +  b_{ij}(\mathbf{x}_i,\mathbf{x}_j) \geq 0$, i.e., $u_i \in \mathcal{U}_i^s(\mathbf{x}_i^+,u_{\mathcal{N}_i})$. This proves
    Point~1.
    Moreover, 
    in this case, 
    since $\mathcal{U}_i^a(\mathbf{x}_i^+)$ is by definition a closed and non-empty halfspace in $\mathbb{R}^{M_i}$, we know $\mathcal{U}_i^a(\mathbf{x}_i^+) \cap \mathcal{U}_i^s(\mathbf{x}_i^+, u_{\mathcal{N}_i}) \neq \emptyset$. When $\sum_{j \in \mathcal{N}_i^+ \setminus \{i\}} a_{ij}(\mathbf{x}_i)^\top u_j  +  b_{ij}(\mathbf{x}_i,\mathbf{x}_j) < 0$, in view of \eqref{eq:safe_cond_separated} and \eqref{eq:lem_alt_cond}, it follows that $\mathcal{U}_i^s(\mathbf{x}_i^+, u_{\mathcal{N}_i}) \subset \mathcal{U}_i^a(\mathbf{x}_i^+)$. Thus Point~2 holds.   
\end{proof}

When an agent $i$ approaches the boundary of its safety constraint $\mathcal{C}_i$, $w_i(x_i)$ grows arbitrarily large by \eqref{eq:w_i^s}. However, we note that $w_i(x_i)$ in~\eqref{eq:w_i^s}, and consequently \eqref{eq:r_ij} and \eqref{eq:hams_rule_lin_eq}, are undefined at $h_i(x_i) = 0$.
We now show that if an agent~$i$ has a significantly greater safety importance when compared with the safety importance of other agents in its 2-hop neighborhood, the altruistic safety condition \eqref{eq:hams_rule_lin_eq} will promote a larger set of feasible safe inputs for agent~$i$ through a new altruistic distributed optimization problem:

\vspace{-2ex}
\small
\begin{align}
    &\phi_i^a(\mathbf{y}) := \min_{u_i \mathbf{y}_i} \quad \Vert u_i - u_{i}^* \Vert^2 \nonumber \\
    & \text{s.t.}  \quad a_{ji}(\mathbf{x}_j)^\top u_i  + \sum_{k \in \mathcal{N}_j^+} (y_i^j - y_k^j) +  b_{ji}(\mathbf{x}_j,\mathbf{x}_i) \geq 0
     ;\forall j \in \mathcal{N}_i^-, \nonumber\\
     & \quad  \quad
     \mathcal{L}_{g_i}h_i(x_i) u_i +  \gamma_i h_i(x_i) \geq \mathcal{L}_{g_i}h_i(x_i) k_i(\mathbf{x}_i), \nonumber \\
     & \quad  \quad
    \sum_{j \in \mathcal{N}_i^-} r_{ij}  a_{ji}(\mathbf{x}_j)^\top u_i 
    \geq  - \sum_{j \in \mathcal{N}_i^-} r_{ij} b_{ji}(\mathbf{x}_j,\mathbf{x}_i). \label{eq:prob_statement_distr_alt}
\end{align}
\normalsize

\begin{theorem} \label{thm:alt_feasibility}
 For a given system state $x_i, \forall i \in [n]$,  let  $\mathbf{u}^{o*}, \mathbf{y}^{o*}$ and $\mathbf{u}^{a*}, \mathbf{y}^{a*}$ be the respective optimal solutions for  \eqref{eq:prob_statement_distr} and \eqref{eq:prob_statement_distr_alt}, both using the update law \eqref{eq:y_updates}.  If $w_i(\mathbf{x}_i^+) \gg w_k(\mathbf{x}_k^+), \forall k \in \bigcup_{j \in \mathcal{N}_i^+} \mathcal{N}_j^-$, then
    \begin{equation} \label{eq:thm1_set_inclusion}
        \mathcal{U}_i^s(\mathbf{x}_i^+, u_{\mathcal{N}_i}^{o*}) \subseteq \mathcal{U}_i^s(\mathbf{x}_i^+, u_{\mathcal{N}_i}^{a*}),
    \end{equation}
    where $u_{\mathcal{N}_i}^{o*}$ and $u_{\mathcal{N}_i}^{a*}$ are the optimal solutions for neighbors $j \in \mathcal{N}_i^+$ according to \eqref{eq:prob_statement_distr} and \eqref{eq:prob_statement_distr_alt}, respectively. 
\end{theorem}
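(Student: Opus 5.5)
The plan is to reduce the set inclusion \eqref{eq:thm1_set_inclusion} to a single scalar inequality and then use the dominance assumption to push the neighbors' altruistic constraints toward agent~$i$'s safety. By \eqref{eq:safe_cond_separated}, $\mathcal{U}_i^s(\mathbf{x}_i^+, u_{\mathcal{N}_i})$ is the halfspace $a_{ii}^\top u_i + b_{ii} + S_i(u_{\mathcal{N}_i}) \geq 0$, where
\[
S_i(u_{\mathcal{N}_i}) := \sum_{j \in \mathcal{N}_i^+ \setminus \{i\}} \big( a_{ij}(\mathbf{x}_i)^\top u_j + b_{ij}(\mathbf{x}_i,\mathbf{x}_j) \big).
\]
Two such halfspaces share the normal $a_{ii}$ and differ only in offset, so the inclusion holds as soon as $S_i(u^{a*}_{\mathcal{N}_i}) \geq S_i(u^{o*}_{\mathcal{N}_i})$. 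If $a_{ii}=0$, each set is either empty or all of $\mathbb{R}^{M_i}$, and the same inequality settles it. The whole proof therefore amounts to showing that, under \eqref{eq:prob_statement_distr_alt}, each neighbor $j \in \mathcal{N}_i^+\setminus\{i\}$ contributes at least as much to agent~$i$'s safety as it does under \eqref{eq:prob_statement_distr}. The natural target is termwise: $B_{ji}(u_j^{a*}) \ge B_{ji}(u_j^{o*})$, where $B_{ji} = a_{ij}^\top u_j + b_{ij}$ by \eqref{eq:safety_benefit_ij}.

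The key step is to examine the altruistic constraint in \eqref{eq:prob_statement_distr_alt} from the viewpoint of a neighbor $j \in \mathcal{N}_i^+$. Since $j \in \mathcal{N}_i^+$, we have $i \in \mathcal{N}_j^-$, so agent~$i$ appears in $j$'s Hamilton sum with weight $r_{ji} = w_i/w_j$. The hypothesis $w_i \gg w_k$ for all $k \in \bigcup_{j\in\mathcal{N}_i^+}\mathcal{N}_j^-$ gives $r_{ji} \gg r_{jk}$ for every other $k \in \mathcal{N}_j^-$, including $k=j$ with $r_{jj}=1$. Dividing by $r_{ji}$, as in the proof of Proposition~\ref{prop:alt_is_safe}, reduces $j$'s altruistic constraint to
\[
a_{ij}(\mathbf{x}_i)^\top u_j + b_{ij}(\mathbf{x}_i,\mathbf{x}_j) + \epsilon \geq 0,
\]
with $\epsilon$ arbitrarily small. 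So each neighbor is forced to give agent~$i$ a nonnegative benefit, up to $\epsilon$. For agent~$i$ itself, Proposition~\ref{prop:alt_is_safe} (with $w_i$ dominating $\mathcal{N}_i^-$) shows that the added constraint is compatible with its own safety set. This means introducing it does not make $i$'s problem infeasible.

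The main obstacle is the comparison with $u_j^{o*}$. The constraint only forces $B_{ji}(u_j^{a*}) \gtrsim 0$, whereas $B_{ji}(u_j^{o*})$ could already be positive, for example if the auxiliary variables $y$ in the original problem shift a large share of $i$'s constraint onto $j$. I will split into two cases.
\begin{itemize}
\item If $B_{ji}(u_j^{o*}) < 0$, the altruistic constraint strictly improves agent~$i$'s benefit.
\item If $B_{ji}(u_j^{o*}) \geq 0$, I would argue that $u_j^{o*}$ is itself feasible for $j$'s altruistic problem. The remaining constraints of \eqref{eq:prob_statement_distr_alt} coincide with those of \eqref{eq:prob_statement_distr}, and $y$ follows the same update law \eqref{eq:y_updates}. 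Hence the extra constraint is inactive, and both problems have the same minimizer, with $u_j^*$ matching the nominal input. This gives equality.
\end{itemize}
Summing over $j$ gives $S_i(u^{a*}) \ge S_i(u^{o*})$ and hence \eqref{eq:thm1_set_inclusion}. Making this case split rigorous, that is, showing the added constraint either is inactive or only increases $B_{ji}$, is the delicate part. I expect to rely on the same asymptotic "$\gg$" idealization, letting $\epsilon\to 0$, that the paper uses in Proposition~\ref{prop:alt_is_safe}.
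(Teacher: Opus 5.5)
Your reduction to $S_i(u^{a*}_{\mathcal{N}_i})\ge S_i(u^{o*}_{\mathcal{N}_i})$ and your division by $r_{ji}$ match the paper's first steps, and your first bullet is fine. The step that fails is the second bullet. There you infer $u_j^{a*}=u_j^{o*}$ from the fact that $j$'s own added constraint is inactive at $u_j^{o*}$, but that inference is valid only for a decoupled problem. Agent $j$'s local problem depends on $\mathbf{y}$, and the equilibrium of $\mathbf{y}$ is set by the multipliers of all agents. So ``the same update law'' can settle at a different $\mathbf{y}$. Equivalently, at convergence both solutions solve the coupled centralized QP \eqref{eq:prob_statement}, the second with every agent's altruistic constraint appended. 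An active added constraint anywhere else can then let $j$ relax toward its nominal input. In particular, agent $i$'s own added constraint is not merely a feasibility matter, as your side remark treats it. By Proposition~\ref{prop:alt_is_safe} it reduces to $a_{ii}^\top u_i+b_{ii}\gtrsim 0$, so it forces $i$ to cover its own requirement, after which its neighbors need not help.

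Here is a concrete instance, taking the same cost in \eqref{eq:prob_statement_distr_alt} as in \eqref{eq:prob_statement_distr}, as both you and the paper assume.
\begin{itemize}
\item Setup: two agents with $j\in\mathcal{N}_i^+$ and $\mathcal{N}_i^-=\{i\}$, scalar inputs, zero nominal inputs, $a_{ii}=a_{ij}=1$, $b_{ii}=-5$, $b_{ij}=0$, and all other constraints slack.
\item Without altruism, $u_i^{o*}=u_j^{o*}=5/2$.
\item The altruistic problem adds $u_i\ge 5$ and, up to $\epsilon$, $u_j\ge 0$, so $u_i^{a*}=5$ and $u_j^{a*}=0$.
\end{itemize}
Here $B_{ji}(u_j^{o*})=5/2\ge 0$, yet $u_j^{a*}\ne u_j^{o*}$. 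Moreover $\mathcal{U}_i^s$ shrinks from $[5/2,\infty)$ to $[5,\infty)$, so \eqref{eq:thm1_set_inclusion} itself fails.

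Even when $i$'s own constraint plays no role, termwise monotonicity is false. Take three agents, otherwise the same setup, with all $a$'s equal to $1$, $b_{ij_1}=-2$ and $b_{ii}=b_{ij_2}=0$. The optimum $(u_i,u_{j_1},u_{j_2})$ moves from $(2/3,2/3,2/3)$ to $(0,2,0)$, and $B_{j_2 i}$ drops from $2/3$ to $0$; only the sum $S_i$ rises.

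The paper avoids your termwise claim. Its ``no change'' case is global: it requires all added constraints, including $i$'s own, to hold at $\mathbf{u}^{o*}$ simultaneously. Then $\mathbf{u}^{o*}$ lies in the smaller feasible set and, since the QP is strictly convex in $\mathbf{u}$, equals $\mathbf{u}^{a*}$; this is the correct form of your bullet. Otherwise the paper iterates over $\mathcal{M}_-/\mathcal{M}_+$, letting the $\mathcal{M}_+$ agents re-optimize. But in the two-agent example that iteration has $\mathcal{M}_-=\{i\}$ and terminates at $(5,0)$, so it does not close the gap either. An additional hypothesis controlling this re-optimization, in particular the effect of agent $i$'s own altruistic constraint, is needed before any proof can succeed.
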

\begin{proof}
     From \eqref{eq:hams_rule_lin_eq}, the additional condition in the altruistic distributed optimization for agent~$j \in \mathcal{N}_i^+$ is
    \begin{equation}\label{eq:thm1_altruism_condition}
       \begin{aligned}
            &r_{ji} (a_{ij}(\mathbf{x}_i)^\top u_j^a + b_{ij}(\mathbf{x}_i,\mathbf{x}_j)) \\ 
            &+ \sum_{k \in \mathcal{N}_j^- \setminus \{i\}} r_{jk} \left(a_{kj}(\mathbf{x}_j)^\top u_j^a + b_{kj}(\mathbf{x}_k,\mathbf{x}_j)\right) \geq 0, 
       \end{aligned}
    \end{equation}
    Multiplying \eqref{eq:thm1_altruism_condition} by $\frac{1}{r_{ji}}$ yields
    \begin{equation*}
      \begin{aligned}
           & a_{ij}(\mathbf{x}_i)^\top u_j^a + b_{ij}(\mathbf{x}_i,\mathbf{x}_j) \\ 
            & + \sum_{k \in \mathcal{N}_j^-\setminus \{i\}} \frac{w_k(\mathbf{x}_k^+)}{w_i(\mathbf{x}_i^+)} \left(a_{kj}(\mathbf{x}_j)^\top u_j^a + b_{kj}(\mathbf{x}_k,\mathbf{x}_j)\right) \geq 0. 
       \end{aligned}
    \end{equation*}
    Thus, if $w_i(\mathbf{x}_i^+) \gg w_k(\mathbf{x}_k^+), \forall k \in \bigcup_{j \in \mathcal{N}_i^+} \mathcal{N}_j^-$, then any feasible solution $u_j^a$ must satisfy
    \begin{equation} \label{eq:thm1_alt_contribution}
        a_{ij}(\mathbf{x}_i)^\top u_j^a + b_{ij}(\mathbf{x}_i,\mathbf{x}_j) + \epsilon \geq 0, \forall j \in  \mathcal{N}_i^+,
    \end{equation}
    where $\epsilon$ is arbitrarily small in magnitude. 
    
    Now consider the respective optimal solutions $u_{\mathcal{N}_i}^{o*}$ and $u_{\mathcal{N}_i}^{a*}$. Recall that both solutions share the same cost functions and constraints except for the additional one in \eqref{eq:thm1_alt_contribution}.  In order to show \eqref{eq:thm1_set_inclusion}, based on the set definition in \eqref{eq:safe_cond_separated} we need to demonstrate that, compared to the optimal formulation in \eqref{eq:prob_statement_distr}, the neighboring agents in the altruistic distributed optimization contribute more to the safety condition of agent~$i$. Mathematically, we need to show
    \begin{equation} \label{eq:thm1_key_inequality}
        \sum_{j \in \mathcal{N}_i^+ \setminus \{i\}} a_{ij}(\mathbf{x}_i)^\top u_j^{o*}  \leq \sum_{j \in \mathcal{N}_i^+ \setminus \{i\}} a_{ij}(\mathbf{x}_i)^\top u_j^{a*}.
    \end{equation}

    Suppose that $a_{ij}(\mathbf{x}_i)^\top u_j^{o*} + b_{ij}(\mathbf{x}_i,\mathbf{x}_j)  \geq 0, \forall j \in  \mathcal{N}_i^+, $ then the additional constraints in \eqref{eq:thm1_alt_contribution} are trivially satisfied by $u_j^{o*}, \forall j \in  \mathcal{N}_i^+$. It follows that $ u_j^{o*} =  u_j^{a*}$ and the inequality in \eqref{eq:thm1_key_inequality} holds. 

    Now suppose that $a_{ij}(\mathbf{x}_i)^\top u_j^{o*} + b_{ij}(\mathbf{x}_i,\mathbf{x}_j)  <  0$ for some  agents. Let $\mathcal{M}_{-}\subseteq  \mathcal{N}_i^+$ be the index set and $\mathcal{M}_{+} = \mathcal{N}_i^+ \setminus \mathcal{M}_{-} $. We consider an optimization problem with the additional constraints \eqref{eq:thm1_alt_contribution} for agents in  $\mathcal{M}_{-}$, and denote the optimal solution to this problem as $\mathbf{u}^{m*}$. If we fix $\mathbf{u}_k$ to be $ \mathbf{u}_k= \mathbf{u}_k^{m*}, k \in \mathcal{M}_{-}$, as no additional conditions are posed on  $\mathbf{u}_{j}, j\in \mathcal{M}_{+}$ and the coupling constraint in \eqref{eq:safe_cond_separated} gets easier to satisfy, the feasible set for $\mathbf{u}_{j}, j\in \mathcal{M}_{+}$ gets larger. Depending on the positiveness of $a_{ij}(\mathbf{x}_i)^\top u_j^{m*} + b_{ij}(\mathbf{x}_i,\mathbf{x}_j) $ for $j\in \mathcal{M}_{+}$, we can repeat the above process and show that the feasible set for $\mathbf{u}_{j}, j\in \mathcal{M}_{2,+}$ gets even larger. When this process terminates, the optimal solution $\mathbf{u}^{m*} = \mathbf{u}^{a*}$, and the feasible set for $\mathbf{u}_{i}$ have this set inclusion property. 
\end{proof}

\subsection{\textbf{Example Continued:} Feasibility with Altruism}
Returning again to our example from the previous section, we illustrate how adding the altruism condition in the distributed optimization problem \eqref{eq:prob_statement_distr_alt} promotes feasibility for agents with higher importance. In Figure~\ref{fig:sub:alt_example}, we show a simulation with the same parameters and initial conditions from Section~\ref{sec:sub:two_constraints}, but with each agent solving for the optimal safe action according to \eqref{eq:prob_statement_distr_alt}, where $\eta_1 = 1$ and $\eta_2 = 1000$, where we note a difference in behavior during $t \in  [0.4, 0.6]$. In this case with one control input, we can quantify the feasibility for agent~2 by computing $u_{2}^{min} = \frac{-(b_{22} + a_{21}u_1^*+b_{21})}{a_{22}}$, where $u_{2}^{min}$ is the minimum $u_2$ that satisfies $\mathcal{U}_2^s(x_1, x_2, u_1)$. In Figure~\ref{fig:sub:u_min_diff}, we compare $u_{2}^*$ between two simulations where $u_1, u_2$ are computed using \eqref{eq:prob_statement_distr} and \eqref{eq:prob_statement_distr_alt} and plot the difference between $u_{2}^{min}$ in both cases, where we see that \eqref{eq:prob_statement_distr_alt} with $\eta_1 = 1$ and $\eta_2 = 1000$ does indeed yield a larger set of feasibly safe actions for agent~2 while still maintaining safety for agent~1.

\begin{figure}
    \centering
    \begin{subfigure}[t]{.49\columnwidth}
        \includegraphics[trim= 13 10 10 10,clip, width=\textwidth]{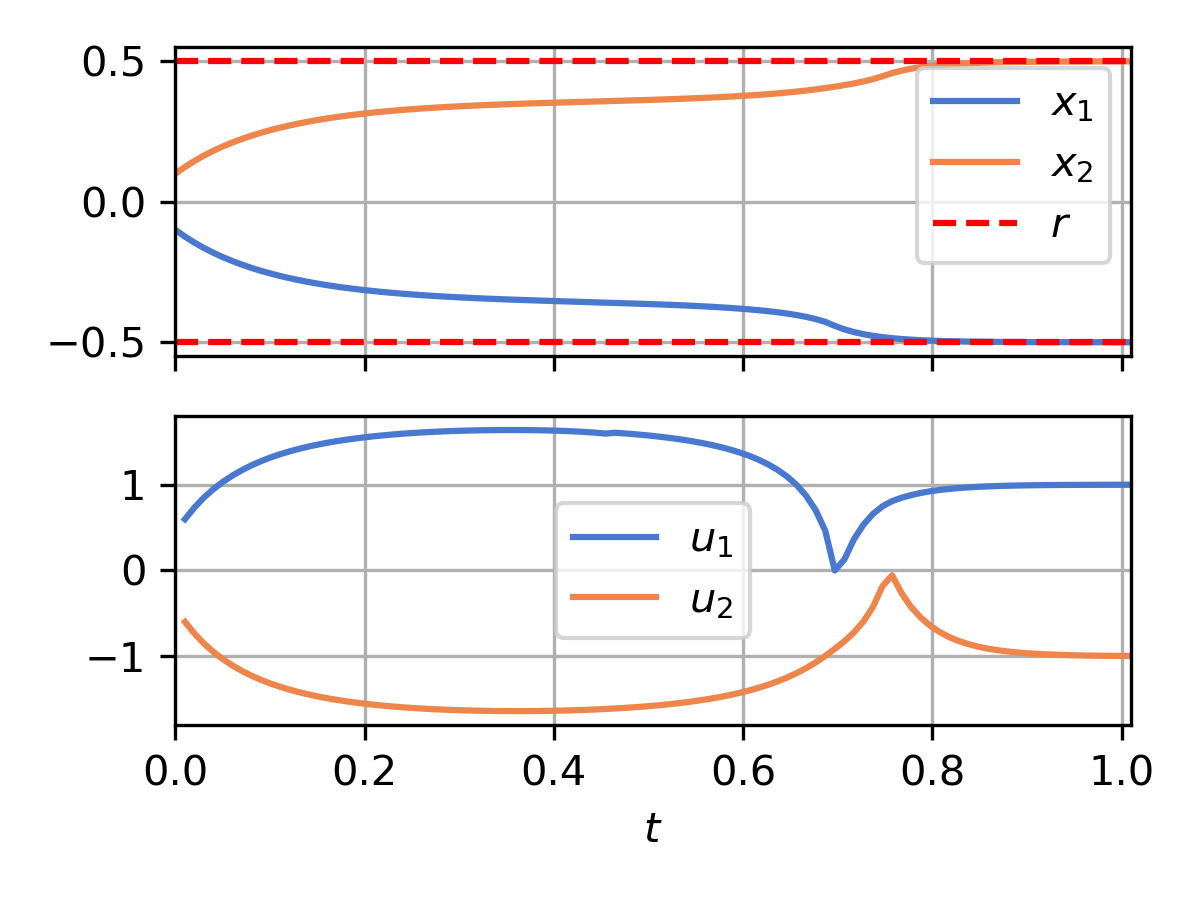}
        \caption{}
        \label{fig:sub:alt_example}
    \end{subfigure}
    \begin{subfigure}[t]{.49\columnwidth}
        \includegraphics[trim= 12 10 10 10,clip, width=\textwidth]{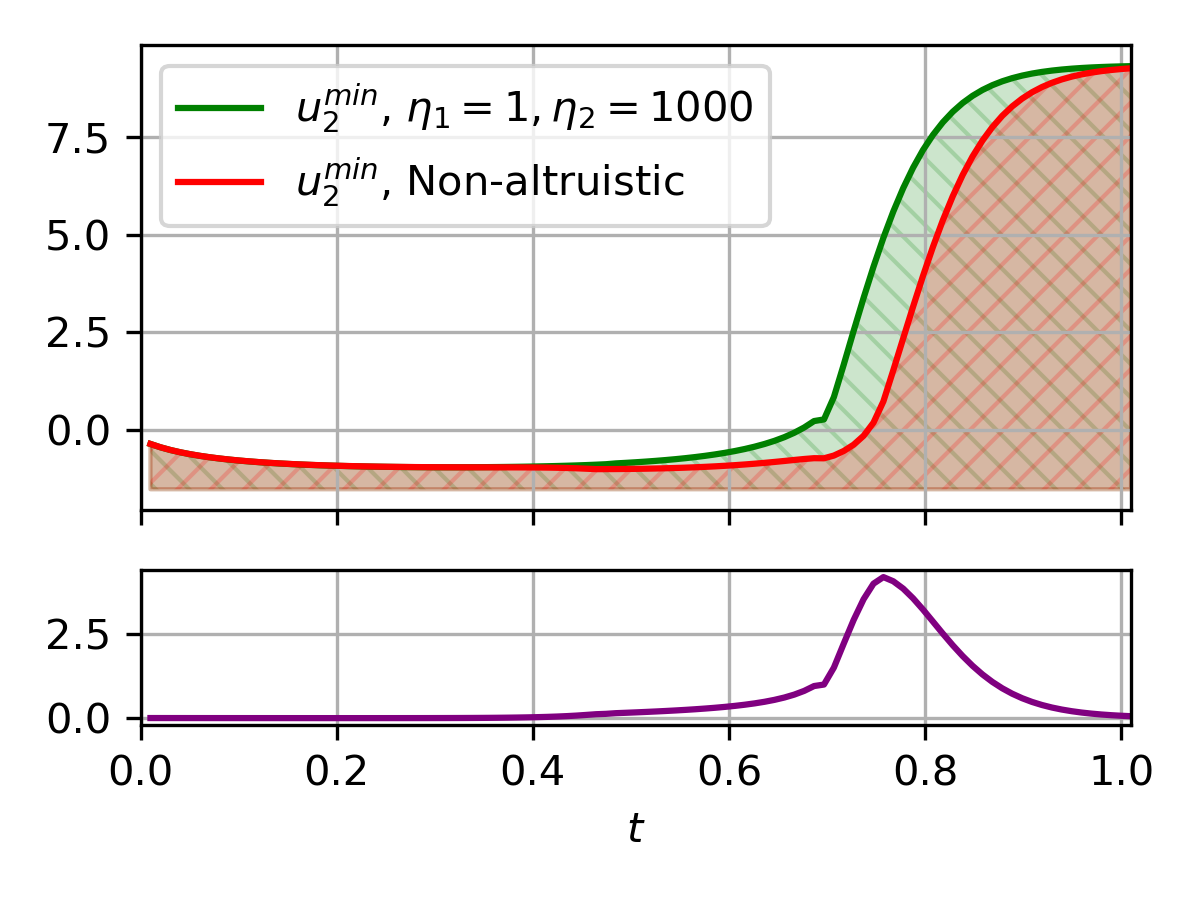}
        \caption{}
        \label{fig:sub:u_min_diff}
    \end{subfigure}
    \captionsetup{belowskip=-15pt}
    \caption{A simulation with the same initial conditions and parameterization as the simulation in Figure~\ref{fig:1D_example}, but with dynamics defined by \eqref{eq:2agent_1D_dyn_2inputs}. In (a), the control inputs $u_1, u_2$ are obtained by solving \eqref{eq:prob_statement_distr_alt} with auxiliary variable updates defined by \eqref{eq:y_updates}, where $\eta_1 = 1$ and $\eta_2=1000$. In (b), we compare the minimum $u_2$ that satisfies $u_2 \in \mathcal{U}_2^s(x_1, x_2, u_1)$ for two simulations: (red) $u_1, u_2$ are obtained by solving \eqref{eq:prob_statement_distr}, and (green) $u_1, u_2$ are obtained by solving \eqref{eq:prob_statement_distr_alt} with $\eta_1=1, \eta_2=1000$. We see that Theorem~\ref{thm:alt_feasibility} holds, with the altruistic system that is biased towards agent~2 yielding a larger feasible set of safe inputs while both agents remain safe as they approach the boundary. The difference between the two simulations is plotted beneath in purple.
    }
    \label{fig:1D_example_alt}
\end{figure}

\section{Conclusion} \label{sec:conclusion}
\vspace{-.5ex}
In this letter, we have introduced a framework for collaborative and altruistic safety in coupled multi-agent systems, leveraging ecologically inspired ideas from Hamilton’s rule. By extending collaborative control barrier functions with altruistic safety conditions, we showed how agents can trade off their own safety margins to prioritize more critical neighbors, thereby enlarging the feasible set of safe actions in distributed optimization. 
Future work will explore applying these ideas to more complex domains such as smart grid control and large-scale cyber-physical networks.

\vspace{-.5ex}

\normalem
\bibliographystyle{IEEEtran}
\bibliography{references}

\end{document}